\documentclass{amsart}

\usepackage{hyperref}
\usepackage{latexsym,amssymb,amsmath,amsfonts}
\usepackage[latin1]{inputenc}
\usepackage{enumerate}
\usepackage[cmtip,arrow]{xy}
\usepackage{pb-diagram,pb-xy}
\usepackage{graphicx}
\usepackage{tikz}
\usepackage{multicol}
\usepackage{url}
\usepackage{listings}
\usepackage{color}

\definecolor{dkgreen}{rgb}{0,0.6,0}
\definecolor{gray}{rgb}{0.5,0.5,0.5}
\definecolor{mauve}{rgb}{0.58,0,0.82}

\lstset{frame=tb,
  language=Python,
  aboveskip=3mm,
  belowskip=3mm,
  showstringspaces=false,
  columns=flexible,
  basicstyle={\small\ttfamily},
  numbers=none,
  numberstyle=\tiny\color{gray},
  keywordstyle=\color{blue},
  commentstyle=\color{dkgreen},
  stringstyle=\color{mauve},
  breaklines=true,
  breakatwhitespace=true,
  tabsize=3
}

\usetikzlibrary{matrix,arrows}

\newtheorem{Theorem}{Theorem}[section] 
\newtheorem{Definition}[Theorem]{Definition} 
\newtheorem{Lemma}[Theorem]{Lemma} 
\newtheorem{Corollary}[Theorem]{Corollary} 
 
\newtheorem{Remark}[Theorem]{Remark}

\newtheorem{Proposition}[Theorem]{Proposition} 
\newtheorem{Claim}[Theorem]{Claim}

\begin{document}

\title{Digraphs with at most one trivial critical ideal}
\author{Carlos A. Alfaro \and Carlos E. Valencia \and Adri\'an V\'azquez-\'Avila}
\thanks{The authors were partially supported by SNI}
\date{}
\maketitle

\begin{abstract}
	Critical ideals generalize the critical group, Smith group and the characteristic polynomials of the adjacency and Laplacian matrices of a graph.
	We give a complete characterization of the digraphs with at most one trivial critical ideal.
	Which implies the characterizations of the digraphs whose critical group has one invariant factor equal to one, and the digraphs whose Smith group has one invariant factor equal to one.
\end{abstract}

\section{Introduction}

In the following, the underlying graph of any digraph must be connected, and will contain no loops.
Given a digraph $D=(V,A)$ and a set of indeterminates $X_D=\{x_u \, : \, u\in V(D)\}$, the {\it generalized Laplacian matrix} $L(D,X_D)$ of $D$ is the matrix with rows and columns indexed by the vertices of $D$ given by
\[\small
L(D,X_D)_{u v}=
\begin{cases}
x_u & \text{ if } u=v,\\
-m_{u v} & \text{ otherwise},
\end{cases}
\]
where $m_{u v}$ is the number of arcs going from $u$ to $v$.

\begin{Definition}
For all $1\leq i \leq |V|$, the $i$-{\it th critical ideal} of $D$ is the determinantal ideal given by
\[
I_i(D,X_D)=\langle  \{ {\rm det} (m) \, : \, m \text{ is an }i\times i \text{ submatrix of }L(D,X_D)\}\rangle\subseteq \mathbb{Z}[X_D].
\]
\end{Definition}

We say that a critical ideal $I_i(D,X_D)$ is {\it trivial} when it is equal to $\langle 1 \rangle$.

\begin{Definition}
The {\it algebraic co-rank} $\gamma(D)$ of a digraph $D$ is the number of trivial critical ideals of $D$.
\end{Definition}

Most of the basic properties of the critical ideals were obtained in \cite{corrval}.
For instance, it was proven that if $H$ is an induced subdigraph of $G$, then $I_i(H,X_H)\subseteq I_i(G,X_G)$  for all $i\leq |V(H)|$.
Thus $\gamma(H)\leq \gamma(G)$.
In this work, we are interested in describing the following digraph family:
\begin{Definition}
For $i\in \mathbb{N}$, let
\[\small
\Gamma_{\leq i}=\{D\, :\, D \text{ is a simple connected digraph with } \gamma(D)\leq i\}.
\]
\end{Definition}


Our main result is the characterization of $\Gamma_{\leq 1}$.
This implies two characterizations: the digraphs whose critical group has one invariant factor equal to 1, and the digraphs whose Smith group has one invariant factor equal to one.
These characterizations generalize, in the digraph context, the characterization of the connected graphs with one trivial critical ideal and the characterization of connected graphs with one invariant factor equal to 1. 
For this, we will study in Section~\ref{digraphs} the minimal forbidden digraphs for $\Gamma_{\leq i}$.
These digraphs have been playing a crucial role in the understating the critical ideals and their classification, and allow us to give the characterization of the digraphs with one trivial critical ideal.
In Section~\ref{CharacterizationOfGamma1}, we will give the complete characterization of $\Gamma_{\leq 1}$.
Finally, in Section~\ref{CharacterizationOfInvariantFactors1}, we give the characterizations of the digraphs whose critical group has one invariant factor equal to 1, and the digraphs whose Smith group has one invariant factor equal to one.

\section{Forbidden and $\gamma$-critical digraphs}\label{digraphs}

The major advantage of the critical ideals over the critical group and the Smith group is that critical ideals behave well under induced subdigraph property.
This property allow us to define the following concepts.

\begin{Definition}
We say that a digraph $D$ is \textit{forbidden} for $\Gamma_{\leq k}$ if and only if $\gamma(D)\geq k+1$.
Moreover, let ${\bf Forb}(\Gamma_{\leq k})$ be the set of minimal (under induced subdigraphs property) forbidden digraphs for $\Gamma_{\leq k}$.
\end{Definition}

Given a family $\mathcal F$ of digraphs, a digraph $D$ is called $\mathcal F$-free if no induced subdigraph of $D$ is isomorphic to a member of $\mathcal F$.
Thus $D\in\Gamma_{\leq k}$ if and only if $D$ is ${\bf Forb}(\Gamma_{\leq k})$-free. 
And equivalently, $D$ belongs to $\Gamma_{\geq k+1}$ if and only if $D$ contains a digraph of ${\bf Forb}(\Gamma_{\leq k})$ as an induced subgraph.
Hence a characterization of ${\bf Forb}(\Gamma_{\leq k})$ leads to a characterization of $\Gamma_{\leq k}$.

As $k$ grows, 
it becames difficult to give a complete description of ${\bf Forb}(\Gamma_{\leq k})$.
An alternative technique of computing the elements of ${\bf Forb}(\Gamma_{\leq k})$ is by means of the following definition.
A digraph $D$ is called $\gamma$-\textit{critical} if $\gamma(D\setminus v)< \gamma(D)$ for all $v\in V(D)$.
That is, $D\in {\bf Forb}(\Gamma_{\leq k})$ if and only if $D$ is $\gamma$-critical such that $\gamma(D)\geq k+1$ and $\gamma(D-v)\leq k$.
We implemented this criterion in the software Sage \cite{sage} and Nauty \cite{mckay}, Table \ref{Tab:ForbiddenDigraphs} shows the number of $\gamma$-critical digraphs with at most 6 vertices.
\begin{table}[h!]
	\begin{center}
	\small
	\begin{tabular}{c|cccccccc}
		\hline
		$k \backslash n$ & 2 & 3 & 4 & 5 & 6\\
		\hline
		1 & 2 &  &  &  & \\
		2 & & 7 & 10 &  & \\
		3 & &  & 61 & 1308 & 414\\
		4 & &  &  & 1183 & 542437\\
		5 & &  &  &  & 38229\\
	\end{tabular}
	\end{center}
	\caption{The number of $\gamma$-critical digraphs with $n$ vertices and algebraic co-rank $k$.}
	\label{Tab:ForbiddenDigraphs}
\end{table}

Using the data obtained from this computation, we have that the directed path $\overrightarrow{P_2}$ with 2 vertices and the directed cycle $\overrightarrow{C_2}$ with 2 vertices are the minimal forbidden digraphs for $\Gamma_{\leq 0}$, see Fig.~\ref{fig:digraphs P2andC2}.
Since any other connected digraph with more than $2$ vertices contains $\overrightarrow{P_2}$ or $\overrightarrow{C_2}$ as induced digraphs, then ${\bf Forb}(\Gamma_{\leq 0})=\{\overrightarrow{P_2},\overrightarrow{C_2}\}$.
Therefore, the digraph $T_1$ consisting of an unique vertex is the only $(\overrightarrow{P_2},\overrightarrow{C_2})$-free connected digraph, that is, $\Gamma_{\leq 0}=\{T_1\}$.

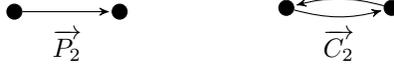
\begin{figure}[h]
\begin{center}
\begin{tabular}{c@{\extracolsep{2cm}}c}
   \begin{tikzpicture}[scale=.7,>=stealth',shorten >=1pt,bend angle = 15]
   \tikzstyle{every node}=[minimum width=0pt, inner sep=2pt, circle]
		\draw (180:1) node (v1) [draw,fill] {};
		\draw (360:1) node (v2) [draw,fill] {};
		\draw[->] (v1) edge (v2);
   \end{tikzpicture}
   
&

   \begin{tikzpicture}[scale=.7,>=stealth',shorten >=1pt,bend angle = 15]
   \tikzstyle{every node}=[minimum width=0pt, inner sep=2pt, circle]
		\draw (180:1) node (v1) [draw,fill] {};
		\draw (360:1) node (v2) [draw,fill] {};
		\draw[->,bend right] (v1) edge (v2);
		\draw[->,bend right] (v2) edge (v1);
   \end{tikzpicture}
   \\
   $\overrightarrow{P_2}$ & $\overrightarrow{C_2}$
\end{tabular}
\end{center}
\label{fig:digraphs P2andC2}
\caption{The $\gamma$-critical digraphs with algebraic co-rank equal to $1$.}
\end{figure}

Also, we have that ${\bf Forb}(\Gamma_{\leq 1})$ consists of $7$ digraphs with $3$ vertices and $10$ digraphs with $4$ vertices (see Figure \ref{fig:ForbDig1}).
These digraphs will be used to completely characterize $\Gamma_{\leq 1}$ in the next section.

Note that the set of digraphs in ${\bf Forb}(\Gamma_{\leq 2})$ is bigger.
For instance, at least contains $61$ digraphs with $4$ vertices, $1308$ digraphs with $5$ vertices, and $414$ digraphs with $6$ vertices.
The size and complexity of ${\bf Forb}(\Gamma_{\leq k})$, for $k\geq 2$, make difficult to obtain a complete characterization of $\Gamma_{\leq k}$.

\section{Digraphs with one trivial critical ideal}\label{CharacterizationOfGamma1}

The main goal of this section is to give the characterization of the digraphs with at most one trivial critical ideal.
As in the case of simple graphs and digraphs with algebraic co-rank equal to zero, the characterization of $\Gamma_{\leq 1}$ relies in that $\Gamma_{\leq 1}$ is closed under induced subdigraphs and that we have previously computed the $\gamma$-critical digraphs with algebraic co-rank equal to $2$.

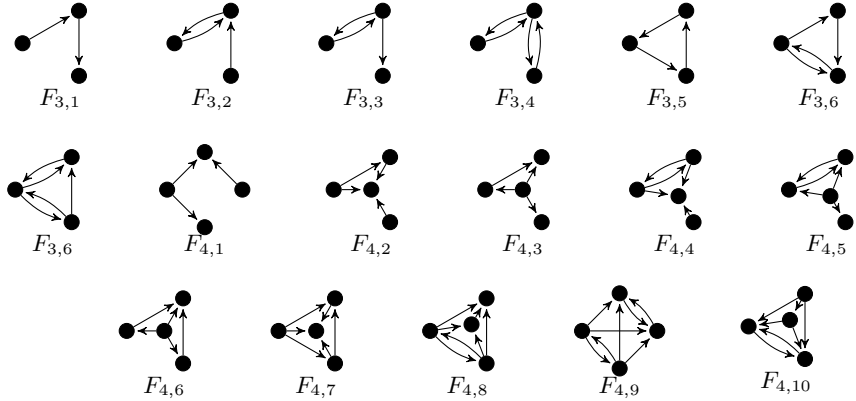
\begin{figure}[h!]
\[
   \begin{tikzpicture}[scale=.5,>=stealth',shorten >=1pt,bend angle = 15]
   \tikzstyle{every node}=[minimum width=0pt, inner sep=2pt, circle]
		\draw (180:1) node (v1) [draw,fill] {};
		\draw (300:1) node (v2) [draw,fill] {};
		\draw (420:1) node (v3) [draw,fill] {};
		\draw (-90:1.5) node (name) {\small $F_{3,1}$};
		\draw[->] (v1) edge (v3);
		\draw[->] (v3) edge (v2);
   \end{tikzpicture}
		\hspace{1cm}
   \begin{tikzpicture}[scale=.5,>=stealth',shorten >=1pt,bend angle = 15]
   \tikzstyle{every node}=[minimum width=0pt, inner sep=2pt, circle]
		\draw (180:1) node (v1) [draw,fill] {};
		\draw (300:1) node (v2) [draw,fill] {};
		\draw (420:1) node (v3) [draw,fill] {};
		\draw (-90:1.5) node (name) {\small $F_{3,2}$};
		\draw[->,bend right] (v1) edge (v3);
		\draw[->] (v2) edge (v3);
		\draw[->,bend right] (v3) edge (v1);
   \end{tikzpicture}
		\hspace{1cm}
   \begin{tikzpicture}[scale=.5,>=stealth',shorten >=1pt,bend angle = 15]
   \tikzstyle{every node}=[minimum width=0pt, inner sep=2pt, circle]
		\draw (180:1) node (v1) [draw,fill] {};
		\draw (300:1) node (v2) [draw,fill] {};
		\draw (420:1) node (v3) [draw,fill] {};
		\draw (-90:1.5) node (name) {\small $F_{3,3}$};
		\draw[->,bend right] (v1) edge (v3);
		\draw[->,bend right] (v3) edge (v1);
		\draw[->] (v3) edge (v2);
   \end{tikzpicture}
		\hspace{1cm}
   \begin{tikzpicture}[scale=.5,>=stealth',shorten >=1pt,bend angle = 15]
   \tikzstyle{every node}=[minimum width=0pt, inner sep=2pt, circle]
		\draw (180:1) node (v1) [draw,fill] {};
		\draw (300:1) node (v2) [draw,fill] {};
		\draw (420:1) node (v3) [draw,fill] {};
		\draw (-90:1.5) node (name) {\small $F_{3,4}$};
		\draw[->,bend right] (v1) edge (v3);
		\draw[->,bend right] (v2) edge (v3);
		\draw[->,bend right] (v3) edge (v1);
		\draw[->,bend right] (v3) edge (v2);
   \end{tikzpicture}
		\hspace{1cm}
   \begin{tikzpicture}[scale=.5,>=stealth',shorten >=1pt,bend angle = 15]
   \tikzstyle{every node}=[minimum width=0pt, inner sep=2pt, circle]
		\draw (180:1) node (v1) [draw,fill] {};
		\draw (300:1) node (v2) [draw,fill] {};
		\draw (420:1) node (v3) [draw,fill] {};
		\draw (-90:1.5) node (name) {\small $F_{3,5}$};
		\draw[->] (v1) edge (v2);
		\draw[->] (v2) edge (v3);
		\draw[->] (v3) edge (v1);
   \end{tikzpicture}
		\hspace{1cm}
   \begin{tikzpicture}[scale=.5,>=stealth',shorten >=1pt,bend angle = 15]
   \tikzstyle{every node}=[minimum width=0pt, inner sep=2pt, circle]
		\draw (180:1) node (v1) [draw,fill] {};
		\draw (300:1) node (v2) [draw,fill] {};
		\draw (420:1) node (v3) [draw,fill] {};
		\draw (-90:1.5) node (name) {\small $F_{3,6}$};
		\draw[->,bend right] (v1) edge (v2);
		\draw[->] (v1) edge (v3);
		\draw[->,bend right] (v2) edge (v1);
		\draw[->] (v3) edge (v2);
   \end{tikzpicture}
		\hspace{1cm}
\]
\[
   \begin{tikzpicture}[scale=.5,>=stealth',shorten >=1pt,bend angle = 15]
   \tikzstyle{every node}=[minimum width=0pt, inner sep=2pt, circle]
		\draw (180:1) node (v1) [draw,fill] {};
		\draw (300:1) node (v2) [draw,fill] {};
		\draw (420:1) node (v3) [draw,fill] {};
		\draw (-90:1.5) node (name) {\small $F_{3,6}$};
		\draw[->,bend right] (v1) edge (v2);
		\draw[->,bend right] (v1) edge (v3);
		\draw[->,bend right] (v2) edge (v1);
		\draw[->] (v2) edge (v3);
		\draw[->,bend right] (v3) edge (v1);
   \end{tikzpicture}
		\hspace{1cm}
   \begin{tikzpicture}[scale=.5,>=stealth',shorten >=1pt,bend angle = 15]
   \tikzstyle{every node}=[minimum width=0pt, inner sep=2pt, circle]
		\draw (180:1) node (v1) [draw,fill] {};
		\draw (270:1) node (v3) [draw,fill] {};
		\draw (360:1) node (v2) [draw,fill] {};
		\draw (450:1) node (v4) [draw,fill] {};
		\draw (-90:1.5) node (name) {\small $F_{4,1}$};
		\draw[->] (v1) edge (v3);
		\draw[->] (v1) edge (v4);
		\draw[->] (v2) edge (v4);
   \end{tikzpicture}
		\hspace{1cm}
   \begin{tikzpicture}[scale=.5,>=stealth',shorten >=1pt,bend angle = 15]
   \tikzstyle{every node}=[minimum width=0pt, inner sep=2pt, circle]
		\draw (180:1) node (v1) [draw,fill] {};
		\draw (300:1) node (v2) [draw,fill] {};
		\draw (420:1) node (v3) [draw,fill] {};
		\draw (0,0) node (v4) [draw,fill] {};
		\draw (-90:1.5) node (name) {\small $F_{4,2}$};
		\draw[->] (v1) edge (v3);
		\draw[->] (v1) edge (v4);
		\draw[->] (v2) edge (v4);
		\draw[->] (v3) edge (v4);
   \end{tikzpicture}
		\hspace{1cm}
   \begin{tikzpicture}[scale=.5,>=stealth',shorten >=1pt,bend angle = 15]
   \tikzstyle{every node}=[minimum width=0pt, inner sep=2pt, circle]
		\draw (180:1) node (v1) [draw,fill] {};
		\draw (300:1) node (v2) [draw,fill] {};
		\draw (420:1) node (v3) [draw,fill] {};
		\draw (0,0) node (v4) [draw,fill] {};
		\draw (-90:1.5) node (name) {\small $F_{4,3}$};
		\draw[->] (v1) edge (v3);
		\draw[->] (v4) edge (v1);
		\draw[->] (v4) edge (v2);
		\draw[->] (v4) edge (v3);
   \end{tikzpicture}
		\hspace{1cm}
   \begin{tikzpicture}[scale=.5,>=stealth',shorten >=1pt,bend angle = 15]
   \tikzstyle{every node}=[minimum width=0pt, inner sep=2pt, circle]
		\draw (180:1) node (v1) [draw,fill] {};
		\draw (300:1) node (v2) [draw,fill] {};
		\draw (420:1) node (v3) [draw,fill] {};
		\draw (300:0.2) node (v4) [draw,fill] {};
		\draw (-90:1.5) node (name) {\small $F_{4,4}$};
		\draw[->,bend right] (v1) edge (v3);
		\draw[->] (v1) edge (v4);
		\draw[->] (v2) edge (v4);
		\draw[->,bend right] (v3) edge (v1);
		\draw[->] (v3) edge (v4);
   \end{tikzpicture}
		\hspace{1cm}
   \begin{tikzpicture}[scale=.5,>=stealth',shorten >=1pt,bend angle = 15]
   \tikzstyle{every node}=[minimum width=0pt, inner sep=2pt, circle]
		\draw (180:1) node (v1) [draw,fill] {};
		\draw (300:1) node (v2) [draw,fill] {};
		\draw (420:1) node (v3) [draw,fill] {};
		\draw (300:0.2) node (v4) [draw,fill] {};
		\draw (-90:1.5) node (name) {\small $F_{4,5}$};
		\draw[->,bend right] (v1) edge (v3);
		\draw[->,bend right] (v3) edge (v1);
		\draw[->] (v4) edge (v1);
		\draw[->] (v4) edge (v2);
		\draw[->] (v4) edge (v3);
   \end{tikzpicture}
		\hspace{1cm}
\]
\[
   \begin{tikzpicture}[scale=.5,>=stealth',shorten >=1pt,bend angle = 15]
   \tikzstyle{every node}=[minimum width=0pt, inner sep=2pt, circle]
		\draw (180:1) node (v1) [draw,fill] {};
		\draw (300:1) node (v2) [draw,fill] {};
		\draw (420:1) node (v3) [draw,fill] {};
		\draw (0,0) node (v4) [draw,fill] {};
		\draw (-90:1.5) node (name) {\small $F_{4,6}$};
		\draw[->] (v1) edge (v3);
		\draw[->] (v2) edge (v3);
		\draw[->] (v4) edge (v1);
		\draw[->] (v4) edge (v2);
		\draw[->] (v4) edge (v3);
   \end{tikzpicture}
		\hspace{1cm}
   \begin{tikzpicture}[scale=.5,>=stealth',shorten >=1pt,bend angle = 15]
   \tikzstyle{every node}=[minimum width=0pt, inner sep=2pt, circle]
		\draw (180:1) node (v1) [draw,fill] {};
		\draw (300:1) node (v2) [draw,fill] {};
		\draw (420:1) node (v3) [draw,fill] {};
		\draw (0,0) node (v4) [draw,fill] {};
		\draw (-90:1.5) node (name) {\small $F_{4,7}$};
		\draw[->] (v1) edge (v2);
		\draw[->] (v1) edge (v3);
		\draw[->] (v1) edge (v4);
		\draw[->] (v2) edge (v3);
		\draw[->] (v2) edge (v4);
		\draw[->] (v3) edge (v4);
   \end{tikzpicture}
		\hspace{1cm}
   \begin{tikzpicture}[scale=.5,>=stealth',shorten >=1pt,bend angle = 15]
   \tikzstyle{every node}=[minimum width=0pt, inner sep=2pt, circle]
		\draw (180:1) node (v1) [draw,fill] {};
		\draw (300:1) node (v2) [draw,fill] {};
		\draw (420:0.2) node (v3) [draw,fill] {};
		\draw (420:1) node (v4) [draw,fill] {};
		\draw (-90:1.5) node (name) {\small $F_{4,8}$};
		\draw[->,bend right] (v1) edge (v2);
		\draw[->] (v1) edge (v3);
		\draw[->] (v1) edge (v4);
		\draw[->,bend right] (v2) edge (v1);
		\draw[->] (v2) edge (v3);
		\draw[->] (v2) edge (v4);
		\draw[->] (v3) edge (v4);
   \end{tikzpicture}
		\hspace{1cm}
   \begin{tikzpicture}[scale=.5,>=stealth',shorten >=1pt,bend angle = 15]
   \tikzstyle{every node}=[minimum width=0pt, inner sep=2pt, circle]
		\draw (180:1) node (v1) [draw,fill] {};
		\draw (270:1) node (v2) [draw,fill] {};
		\draw (360:1) node (v3) [draw,fill] {};
		\draw (450:1) node (v4) [draw,fill] {};
		\draw (-90:1.5) node (name) {\small $F_{4,9}$};
		\draw[->,bend right] (v1) edge (v2);
		\draw[->] (v1) edge (v3);
		\draw[->] (v1) edge (v4);
		\draw[->,bend right] (v2) edge (v1);
		\draw[->] (v2) edge (v3);
		\draw[->] (v2) edge (v4);
		\draw[->,bend right] (v3) edge (v4);
		\draw[->,bend right] (v4) edge (v3);
   \end{tikzpicture}
		\hspace{1cm}
   \begin{tikzpicture}[scale=.5,>=stealth',shorten >=1pt,bend angle = 15]
   \tikzstyle{every node}=[minimum width=0pt, inner sep=2pt, circle]
		\draw (180:1) node (v1) [draw,fill] {};
		\draw (300:1) node (v2) [draw,fill] {};
		\draw (420:1) node (v3) [draw,fill] {};
		\draw (420:0.2) node (v4) [draw,fill] {};
		\draw (-90:1.5) node (name) {\small $F_{4,10}$};
		\draw[->,bend right] (v1) edge (v2);
		\draw[->,bend right] (v2) edge (v1);
		\draw[->] (v3) edge (v1);
		\draw[->] (v3) edge (v2);
		\draw[->] (v3) edge (v4);
		\draw[->] (v4) edge (v1);
		\draw[->] (v4) edge (v2);
   \end{tikzpicture}
\]
\caption{The seventeen $\gamma$-critical digraphs with $3$ and $4$ vertices that have algebraic co-rank equal to $2$.}
\label{fig:ForbDig1}
\end{figure}

Let $\mathfrak F$ be the family of digraphs shown in Figure~\ref{fig:ForbDig1}; consisting of $17$ $\gamma$-critical digraphs with algebraic co-rank equal to $2$.
In appendix \ref{sec:FinForb} there is a sage code to verify the following lemma.

\begin{Lemma}\label{lemma:FinForbGamma}
If $\mathfrak F$ is the family of digraphs shown in Figure \ref{fig:ForbDig1}, then $\mathfrak F\subseteq {\bf Forb}(\Gamma_{\leq 1})$.
\end{Lemma}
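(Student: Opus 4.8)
The plan is to verify, for each of the $17$ digraphs $D\in\mathfrak F$, the two conditions that characterize membership in ${\bf Forb}(\Gamma_{\le 1})$ recalled in Section~\ref{digraphs}: that $\gamma(D)\ge 2$ (so that $D$ is forbidden) and that $\gamma(D\setminus v)\le 1$ for every $v\in V(D)$ (so that $D$ is minimal, where we use that $\gamma$ is monotone under the induced-subdigraph order, so only one-vertex deletions need to be tested). First I would reduce both $\gamma$-conditions to a single critical ideal. Expanding an $(i+1)\times(i+1)$ minor of $L(D,X_D)$ along a row writes it as a $\mathbb{Z}[X_D]$-combination of $i\times i$ minors, so $I_{i+1}(D,X_D)\subseteq I_i(D,X_D)$; hence the trivial critical ideals form an initial segment $I_1(D,X_D),\dots,I_{\gamma(D)}(D,X_D)$, and in particular $\gamma(D)\ge 2\iff I_2(D,X_D)=\langle 1\rangle$ while $\gamma(H)\le 1\iff I_2(H,X_H)\ne\langle 1\rangle$. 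So it suffices to check, for each $D\in\mathfrak F$, that $I_2(D,X_D)=\langle 1\rangle$ and $I_2(D\setminus v,X_{D\setminus v})\ne\langle 1\rangle$ for every $v\in V(D)$.

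For the triviality $I_2(D,X_D)=\langle 1\rangle$ I would, in each case, point to an explicit $2\times 2$ submatrix of $L(D,X_D)$ of determinant $\pm 1$. Two configurations suffice. First, an induced directed path $a\to b\to c$ with no arc $a\to c$ gives, on rows $\{a,b\}$ and columns $\{b,c\}$, the submatrix $\left(\begin{smallmatrix}-1 & 0\\ x_b & -1\end{smallmatrix}\right)$, of determinant $1$. Second, a vertex $a$ emitting arcs to two distinct vertices $k,l$ together with a vertex $j\notin\{a,k,l\}$ emitting an arc to $l$ but not to $k$ gives, on rows $\{a,j\}$ and columns $\{k,l\}$, the submatrix $\left(\begin{smallmatrix}-1 & -1\\ 0 & -1\end{smallmatrix}\right)$, of determinant $1$. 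In each of the seventeen cases one of these configurations — or a transposed variant of it — is visible in Figure~\ref{fig:ForbDig1}, and the accompanying code merely searches for a unimodular $2\times 2$ minor.

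For the non-triviality $I_2(D\setminus v,X_{D\setminus v})\ne\langle 1\rangle$: when $D$ has three vertices this is automatic, since $D\setminus v$ then has at most two vertices and $I_2$ is either $\langle 0\rangle$ or the principal ideal generated by $x_ax_b-m_{ab}m_{ba}$, which is never a unit of $\mathbb{Z}[x_a,x_b]$. When $D$ has four vertices, $D\setminus v$ runs over at most forty three-vertex digraphs, and for each I would exhibit a proper ideal of $\mathbb{Z}[X_{D\setminus v}]$ containing $I_2(D\setminus v,X_{D\setminus v})$ — concretely, by specializing the indeterminates to suitable integers (usually all $0$) so that the ideal generated by the resulting integer $2\times 2$ minors has $\gcd$ different from $\pm 1$, or equivalently by producing a point of the variety of $I_2(D\setminus v,X_{D\setminus v})$ over some $\mathbb{F}_p$. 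All the three-vertex digraphs occurring this way are of a benign type (an arc plus an isolated vertex, an in- or out-star, a transitive triangle, or the like), for each of which $I_2\ne\langle 1\rangle$ is checked directly.

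The step I expect to be the main obstacle is this non-triviality half: establishing $1\in I_2(D,X_D)$ requires only one fortunate minor, whereas establishing $1\notin I_2(D\setminus v,X_{D\setminus v})$ requires a genuine certificate — a Gr\"obner basis of the ideal over $\mathbb{Z}[X_D]$, or an explicit maximal ideal containing it — and this is exactly the verification performed uniformly by the Sage routine of Appendix~\ref{sec:FinForb}; since every digraph involved has at most four vertices, it is also entirely feasible by hand. Finally, note that this lemma yields only the inclusion $\mathfrak F\subseteq{\bf Forb}(\Gamma_{\le 1})$; that these $17$ digraphs in fact exhaust ${\bf Forb}(\Gamma_{\le 1})$ will follow from the characterization of $\Gamma_{\le 1}$ obtained in the next section.
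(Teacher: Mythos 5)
Your proposal is correct and takes essentially the same route as the paper, which simply delegates this finite verification to the Sage code of Appendix~\ref{sec:FinForb}: the code's \texttt{Gamma} routine certifies $I_2(D,X_D)=\langle 1\rangle$ and its \texttt{isForb} routine certifies $I_2(D\setminus v,X_{D\setminus v})\neq\langle 1\rangle$ for every $v$, exactly the two conditions you reduce the lemma to via the containment $I_{i+1}\subseteq I_i$ and the monotonicity of $\gamma$. Your explicit unimodular $2\times 2$ configurations are a nice hand-checkable substitute for the first half, and you correctly flag that the non-triviality half is the part that genuinely needs the computed certificate.
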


A digraph $D$ is {\it complete} if, for every pair $u,v$ of distinct vertices of $D$, both arcs $(uv)$ and $(vu)$ are in $D$.
For simplicity, let $uv$ denote the existence of both arcs $(uv)$ and $(vu)$.
The complete digraph with $n$ vertices is denoted by ${K_n}$.
The trivial digraph with $n$ vertices with no arcs is denoted by $T_n$.
Let $D_1$ and $D_2$ be vertex-disjoint subdigraphs of $D$.
The set of arcs with tails in $V(D_1)$ and heads in $V(D_2)$ is denoted by $(D_1,D_2)_D$.
We say that $(D_1,D_2)_D$ is {\it complete} when is equal to $\{uv\,:\, u\in V(D_1) \text{ and } v\in V(D_2)\}$.

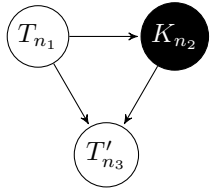
\begin{figure}[h!]
\begin{center}
\begin{tikzpicture}[scale=.7,>=stealth',shorten >=1pt,bend angle = 15]
 	\tikzstyle{every node}=[minimum width=0pt, inner sep=2pt, circle]
 	\draw (30:1.5) node[draw, fill] (1) {\bf\color{white} $K_{n_2}$};
 	\draw (150:1.5) node[draw] (2) {$T_{n_1}$};
 	\draw (270:1.5) node[draw] (3) {$T'_{n_3}$};
 	\draw[->] (1) -- (3);
 	\draw[->] (2) -- (1);
 	\draw[->] (2) -- (3);
 \end{tikzpicture}
\end{center}
\caption{The digraph $\Lambda_{n_1,n_2,n_3}$.}
\label{figure:lambda}
\end{figure}

Let $\Lambda_{n_1,n_2n_3}$ be the digraph defined in the following way:
The vertex set $V(\Lambda_{n_1,n_2n_3})$ is partitioned in three sets $T$, $T'$ and $K$ with $n_1,n_3$ and $n_2$ vertices, respectively, 
such that $T$ and $T'$ are two trivial digraphs, and $K$ is a complete digraph.
Additionally, the arc sets $(T,K)_{\Lambda_{n_1,n_2,n_3}}$, $(T,T')_{\Lambda_{n_1,n_2,n_3}}$ and $({K},T')_{\Lambda_{n_1,n_2,n_3}}$ are complete.
See Figure~\ref{figure:lambda} for a graphical representation of $\Lambda_{n_1,n_2n_3}$.
Let $\Lambda$ be the family of digraphs consisting of all the connected  digraphs $\Lambda_{n_1,n_2n_3}$ for all $n_1,n_2,n_3\geq0$.

\begin{Lemma}\label{lemma:main}
If $\Lambda_{n_1,n_2,n_3}$ is a connected digraph with $n_1+n_2+n_3\geq2$, then $I_1(\Lambda_{n_1,n_2,n_3},\{X_T,Y_K,Z_{T'}\})$ is trivial, and
\[\small
I_2(\Lambda_{n_1,n_2,n_3},\{X_T,Y_K,Z_{T'}\})=
\begin{cases}
\left<\cup_{i=1}^{n_1}x_i, \cup_{i=1}^{n_2}(y_i+1), \cup_{i=1}^{n_3}z_i \right>, & \text{ if } n_1,n_2,n_3\geq 1,\\
\left<x_1 y_1 \right>, & \text{ if } n_1=n_2=1, n_3=0\\
\left<x_1 z_1 \right>, & \text{ if } n_1=n_3=1, n_2=0\\
\left<y_1 z_1 \right>, & \text{ if } n_2=n_3=1, n_1=0\\
\left<\cup_{i=1}^{n_1}x_i \right>, & \text{ if } n_1\geq2,n_2=1,n_3=0,\\
\left<\cup_{i=1}^{n_1}x_i, \cup_{i=1}^{n_2}(y_i+1)\right>, & \text{ if } n_1\geq1,n_2\geq2,n_3=0,\\
\left<y_1y_2-1\right>, & \text{ if } n_1=0,n_2=2,n_3=0,\\
\left<\cup_{i<j}(y_i+1)(y_j+1)\right>, & \text{ if } n_1=0,n_2\geq3,n_3=0,\\
\left<\cup_{i=1}^{n_1}z_i \right>, & \text{ if } n_1=0,n_2=1,n_3\geq2,\\
\left<\cup_{i=1}^{n_2}(y_i+1), \cup_{i=1}^{n_3}z_i \right>, & \text{ if } n_1=0,n_2\geq2,n_3\geq 1,\\
\left<\cup_{i=1}^{n_3}z_i \right>, & \text{ if } n_1=1,n_2=0,n_3\geq 2,\\
\left<\cup_{i=1}^{n_1}x_i \right>, & \text{ if } n_1\geq2,n_2=0,n_3= 1,\\
\left<\cup_{i=1}^{n_1}x_i, \cup_{i=1}^{n_3}z_i \right>, & \text{ if } n_1\geq2,n_2=0,n_3\geq2.\\
\end{cases}
\]
\end{Lemma}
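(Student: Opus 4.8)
The plan is to read off $I_1$ and $I_2$ directly from the generalized Laplacian, using its block structure. Order the vertices of $\Lambda_{n_1,n_2,n_3}$ as $T$, then $K$, then $T'$. Since the only arcs go from $T$ to $K$, from $T$ to $T'$, from $K$ to $T'$, or between two vertices of $K$, the matrix $L(\Lambda_{n_1,n_2,n_3},\{X_T,Y_K,Z_{T'}\})$ is block upper triangular,
\[
L=\begin{pmatrix}
\mathrm{diag}(x_1,\dots,x_{n_1}) & -J & -J\\
0 & \mathrm{diag}(y_1,\dots,y_{n_2})+I-J & -J\\
0 & 0 & \mathrm{diag}(z_1,\dots,z_{n_3})
\end{pmatrix},
\]
where each $J$ is an all-ones block of the appropriate size and $I$ is the $n_2\times n_2$ identity, so the off-diagonal entries within the $K$-block equal $-1$ and every entry outside the three diagonal blocks is $0$ below the diagonal and $-1$ above it. The triviality of $I_1$ is then immediate: a connected digraph on $n_1+n_2+n_3\geq 2$ vertices has an arc, so $L$ has an entry equal to $-1$, which is a $1\times 1$ minor, whence $I_1(\Lambda_{n_1,n_2,n_3},\{X_T,Y_K,Z_{T'}\})=\langle 1\rangle$.

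For $I_2$ the approach is a finite case analysis driven by one observation. A $2\times 2$ submatrix of $L$ is specified by a choice of two rows and two columns, and I would classify it by recording in which of the blocks $T$, $K$, $T'$ the chosen rows and columns lie. Because $L$ is block upper triangular, the $T$- and $T'$-diagonal blocks are diagonal matrices, and every entry outside the diagonal blocks is $0$ or $-1$, the determinant of any such submatrix evaluates to one of only finitely many shapes: $0$, $x_ix_j$, $z_iz_j$, $y_iy_j-1$, $x_iy_j$, $x_iz_j$, $y_iz_j$, $\pm x_i$, $\pm z_i$, or $\pm(y_i+1)$. Moreover, which of these shapes actually occur depends only on which of $n_1,n_2,n_3$ are $0$, are $1$, or are at least $2$ (and, for the shape $\pm(y_i+1)$, on whether $n_2\geq 3$, or $n_2\geq 2$ with $n_1\geq 1$ or $n_3\geq 1$). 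So I would first tabulate, once and for all, the determinant of each block-type of $2\times 2$ submatrix, and then run through the regimes of $(n_1,n_2,n_3)$ listed on the right-hand side of the statement.

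In each regime the verification splits into two halves. For the inclusion $\supseteq$, one exhibits for every listed generator a concrete submatrix realising it up to sign: for instance $\det\left(\begin{smallmatrix} x_i & -1\\ 0 & -1\end{smallmatrix}\right)=-x_i$ from a $T$-row on $x_i$, a second row sharing a common out-neighbour with it, and the corresponding two columns; $\det\left(\begin{smallmatrix} y_i & -1\\ -1 & -1\end{smallmatrix}\right)=-(y_i+1)$ from two $K$-rows and two $K$-columns, or from two $K$-rows, the $i$-th $K$-column and a $T'$-column when $n_2=2$ and $n_3\geq 1$; and analogously for $z_i$, $y_iy_j-1$, and the product shapes. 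For the inclusion $\subseteq$, one checks that every remaining minor shape lies in the ideal generated by the listed generators, which follows from identities such as $x_ix_j=x_j\cdot x_i$, $x_iy_j=y_j\cdot x_i$, $x_iz_j=z_j\cdot x_i$, $y_iz_j=z_j\cdot y_i$ and $y_iy_j-1=y_j(y_i+1)-(y_j+1)$; in the degenerate regimes, where one of the blocks is empty, the same identities apply but with a shorter list of available minors. One may also shorten the $\subseteq$ direction by noting that the ideal of $2\times 2$ minors is invariant under elementary row and column operations, and, e.g., subtracting one $K$-row from another collapses the $K$-block considerably. The Sage code in the appendix can be used to confirm the small base cases.

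The main obstacle is not a single hard step but the bookkeeping of the boundary cases: one must be careful about exactly which triples $(n_1,n_2,n_3)$ yield connected digraphs (so that the statement even applies), and about the borderline between the regimes in which all of $y_1+1,\dots,y_{n_2}+1$ are produced as minors and the regime $n_1=0$, $n_2=2$, $n_3=0$ (the digraph $\overrightarrow{C_2}$), where only the single determinant $y_1y_2-1$ survives. Pinning down these boundaries, together with the empty-block degeneracies that correspond to the special lines of the statement, is where essentially all of the work lies.
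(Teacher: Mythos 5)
Your proposal follows essentially the same route as the paper: write down the block upper triangular generalized Laplacian, observe that any arc gives a $-1$ entry so $I_1$ is trivial, enumerate the possible shapes of nonsingular $2\times 2$ minors ($x_ix_j$, $x_i$, $x_iy_j$, $y_i+1$, $y_iz_j$, $x_iz_j$, $z_i$, $y_iy_j-1$, $z_iz_j$), and check both inclusions using identities such as $y_iy_j-1=y_j(y_i+1)-(y_j+1)$, with the degenerate cases handled by noting which shapes survive when blocks are empty or small. Your version is just more explicit than the paper's about the boundary regimes, which the paper compresses into the remark that the remaining cases have minors forming a subset of those already listed.
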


\begin{proof}
	We have that 
	\[
	L(\Lambda_{n_1,n_2,n_3},\{X_T,Y_K,Z_{T'}\})=
	\left[
	\begin{array}{ccc}
	L(T,X_{T}) & -J_{n_1,n_2} & -J_{n_1,n_3} \\
	\bf{0}_{n_2,n_1} & L(K,Y_{K}) & -J_{n_2,n_3} \\
	\bf{0}_{n_3,n_1} & \bf{0}_{n_3,n_2} & L(T',Z_{T'})
	\end{array}
	\right],
	\]
	where $J_{m,n}$ denote the all ones $m\times n$-matrix.
	It is easy to see that the first critical ideal of a digraph with at least one arc is trivial.
	For the case when $n_1,n_2,n_3\geq 1$, we have that the non-singular 2-minors (with positive leading coefficient) of the generalized Laplacian matrix of $\Delta_{n_1,n_2,n_3}$ are of the form:
	$x_ix_j$, $x_i$, $x_iy_j$, $y_i+1$, $y_iz_j$, $x_iz_j$, $z_i$, $y_iy_j-1$, $z_iz_j$.
	Since $x_ix_j$, $x_iy_j$, $y_iz_j$, $x_iz_j$ and $y_iy_j-1$ are in $\left<\cup_{i=1}^{n_1}x_i, \cup_{i=1}^{n_2}(y_i+1), \cup_{i=1}^{n_3}z_i \right>$, we have that $L(\Lambda_{n_1,n_2,n_3},\{X_T,Y_K,Z_{T'}\})=\left<\cup_{i=1}^{n_1}x_i, \cup_{i=1}^{n_2}(y_i+1), \cup_{i=1}^{n_3}z_i \right>$.
	The non-singular 2-minors of the rest of the cases are a subset of the obtained previously, from which the result follows.
\end{proof}

Now we give the characterization of the digraphs with at most one trivial critical ideal.

\begin{Theorem}\label{teo:main}
If $D$ is a connected simple digraph, then the following statements are equivalent:
	\begin{enumerate}[i.]
		\item $D\in \Gamma_{\leq1}$,
		\item $D$ is $\mathfrak F$-free.
		\item $D$ is isomorphic to $\Lambda_{n_1,n_2,n_3}$ for some natural numbers $n_1$, $n_2$ and $n_3$.
	\end{enumerate}
\end{Theorem}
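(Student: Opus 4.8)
The plan is to prove the cycle of implications (iii)$\Rightarrow$(i)$\Rightarrow$(ii)$\Rightarrow$(iii). The first two implications are essentially immediate from what has already been established. For (iii)$\Rightarrow$(i): if $D\cong\Lambda_{n_1,n_2,n_3}$ is connected, then since connectedness of the underlying graph forces $n_1+n_2+n_3\geq 2$ (a single vertex being the degenerate case in $\Gamma_{\leq 0}$, which we may treat separately or absorb), Lemma~\ref{lemma:main} tells us $I_1$ is trivial and $I_2$ is generated by linear and quadratic polynomials that are manifestly not equal to $\langle 1\rangle$ (every generator listed has constant term $0$ except those of the form $y_i+1$, $y_iy_j-1$, $(y_i+1)(y_j+1)$, but in each case the whole ideal still misses $1$ because evaluating all the $x$'s at $0$, all $z$'s at $0$, and all $y$'s at $-1$ kills every generator). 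Hence $\gamma(D)=1$, so $D\in\Gamma_{\leq 1}$. For (i)$\Rightarrow$(ii): by Lemma~\ref{lemma:FinForbGamma}, every member of $\mathfrak F$ has algebraic co-rank $2$, hence lies in ${\bf Forb}(\Gamma_{\leq 1})$; if $D$ contained an induced copy of some $F\in\mathfrak F$ then by monotonicity of $\gamma$ under induced subdigraphs ($\gamma(H)\leq\gamma(G)$) we would get $\gamma(D)\geq\gamma(F)=2$, contradicting $D\in\Gamma_{\leq 1}$. So $D$ is $\mathfrak F$-free.

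The real content is (ii)$\Rightarrow$(iii): a structural classification showing that every connected $\mathfrak F$-free digraph is a $\Lambda$. I would argue as follows. Given a connected $\mathfrak F$-free digraph $D$, first analyze what being $\{F_{3,1},\dots,F_{3,6}\}$-free (the seven $3$-vertex obstructions, noting the figure labels $F_{3,6}$ twice so one of them should be $F_{3,7}$) imposes on every triple of vertices. The $3$-vertex forbidden digraphs should pin down the local structure tightly: for instance $F_{3,5}$ is the directed $3$-cycle, so $D$ has no induced directed triangle; $F_{3,1}$ is a directed path on $3$ vertices, so $D$ has no induced $\overrightarrow{P_3}$; and the mixed ones ($F_{3,2},F_{3,3},F_{3,4},F_{3,6},F_{3,7}$) rule out various combinations of a digon with a single arc. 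From this I would extract: (a) the "digon graph" $G^\circ$ of $D$ (vertices joined iff $uv$ is a digon) is a disjoint union of cliques — in fact I expect at most one nontrivial clique, which will become $K_{n_2}$; (b) the single-arc relation is transitive-free in a strong sense and, combined with connectedness, organizes the non-digon vertices into the two trivial parts $T$ and $T'$ with all arcs flowing $T\to K$, $T\to T'$, $K\to T'$. Then the $4$-vertex obstructions $F_{4,1},\dots,F_{4,10}$ are used to upgrade "locally looks like $\Lambda$ on every triple" to "globally is $\Lambda$": they forbid the configurations where, say, two vertices of $T$ have inconsistent out-neighborhoods, or a vertex sits ambiguously between the parts, or the bipartite arc sets $(T,K)$, $(T,T')$, $(K,T')$ fail to be complete. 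In particular $F_{4,1}$ (two disjoint arcs plus a third sharing a head) and $F_{4,2},F_{4,6},F_{4,7}$ should force the completeness of the three arc sets, while $F_{4,4},F_{4,5},F_{4,8},F_{4,9},F_{4,10}$ handle the interaction of the digon-clique $K$ with the rest.

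Concretely, the induction or direct argument I would run: pick a vertex $v$ and look at $N^+(v)$, $N^-(v)$, and the digon-neighborhood $N^\circ(v)$; $\mathfrak F$-freeness forces these to partition $V(D)\setminus\{v\}$ cleanly and forces every two vertices in the same class to have the same external adjacency pattern, so the classes are exactly the blocks $T$, $K$, $T'$ after sorting by which of "source / digon-clique / sink" role $v$ plays relative to its neighbors. Then verify the three inter-class arc sets are complete and correctly oriented, and that no arc goes backwards ($T'\to T$, $T'\to K$, or $K\to T$) — each such violation produces one of the listed $F_{3,*}$ or $F_{4,*}$ as an induced subdigraph. Connectedness then gives that the resulting $\Lambda_{n_1,n_2,n_3}$ is the connected one, i.e. satisfies the constraint making it a member of $\Lambda$.

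The main obstacle I anticipate is the case analysis in (ii)$\Rightarrow$(iii): one must show that the $17$ small obstructions are \emph{sufficient}, i.e. that there is no "large" connected $\mathfrak F$-free digraph outside $\Lambda$. The danger is a configuration that is locally fine on all triples and quadruples yet globally not a $\Lambda$; ruling this out requires showing the equivalence relation "same external adjacency" is genuinely well-defined on all of $V(D)$ and that transitivity/completeness of the arc sets propagates. I would handle this by a careful "all $3$-subsets first, then all $4$-subsets" reduction: once every induced sub-digraph on $\le 4$ vertices is itself a (possibly disconnected) $\Lambda$-type digraph, a gluing lemma shows $D$ is a $\Lambda$. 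The bookkeeping of which obstruction kills which bad local pattern is tedious but finite, and the Sage verification in Appendix~\ref{sec:FinForb} backs the claim that these $17$ are exactly the minimal ones with $\le 4$ vertices, so no obstruction is missed at the scales that matter for the gluing step.
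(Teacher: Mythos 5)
Your implications (iii)$\Rightarrow$(i) and (i)$\Rightarrow$(ii) are correct and match the paper: the first follows from Lemma~\ref{lemma:main} (and your evaluation $x_i\mapsto 0$, $z_i\mapsto 0$, $y_i\mapsto -1$ is a clean way to see that the listed $I_2$ is proper, hence $\gamma\leq 1$ since the critical ideals are nested), and the second is exactly Lemma~\ref{lemma:FinForbGamma} plus monotonicity of $\gamma$ under induced subdigraphs.

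The gap is in (ii)$\Rightarrow$(iii), which is the entire content of the theorem, and there you have only a plan, not a proof. Two specific problems. First, the structural facts you ``expect'' --- that the digon relation is transitive with at most one nontrivial clique, that the non-digon vertices split into a source class $T$ and a sink class $T'$, and that the three inter-class arc sets are complete and consistently oriented --- each require exhibiting a concrete member of $\mathfrak F$ inside every violating configuration; none of this bookkeeping is carried out. Second, and more seriously, your proposed ``gluing lemma'' (every induced subdigraph on at most $4$ vertices is $\Lambda$-type, therefore $D$ is a $\Lambda$) is asserted without proof, and as stated it is essentially the theorem itself: the danger you yourself name --- a digraph locally fine on all triples and quadruples yet globally not a $\Lambda$ --- is precisely what must be excluded, and nothing in the proposal excludes it. The paper closes this gap by induction on $|V(D)|$: delete a vertex $v$, invoke the inductive hypothesis to write $D-v\cong\Lambda_{n_1,n_2,n_3}$, and then run an exhaustive case analysis (Claims~\ref{claim:complete}--\ref{claim:casoA5}) on the possible arc sets between $v$ and a connected triple $\{a,b,c\}$ of $D-v$ realizing each of the six allowed $3$-vertex patterns of Figure~\ref{fig:AllowedDig1}, concluding that $v$ must join one of the parts $T$, $K$, $T'$. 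Reorganizing your sketch into that vertex-deletion induction (or actually proving your neighborhood-partition claim vertex by vertex) is what is needed to make the argument complete.
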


\begin{proof}
By Lemma \ref{lemma:FinForbGamma}, each digraph in $\mathfrak F$ has algebraic co-rank 2, then a digraph in $\Gamma_{\leq1}$ is $\mathfrak F$-free.
Also, Lemma \ref{lemma:main} implies that each digraph in $\Lambda$ belongs to $\Gamma_{\leq1}$.
Therefore, it remains to prove that a $\mathfrak F$-free digraph is isomorphic to $\Lambda_{n_1,n_2,n_3}$ for some integers $n_1$, $n_2$ and $n_3$.
We will proceed by induction on $n=n_1+n_2+n_3$.

\begin{figure}[h!]
\[
   \begin{tikzpicture}[scale=.5,>=stealth',shorten >=1pt,bend angle = 15]
   \tikzstyle{every node}=[minimum width=0pt, inner sep=2pt, circle]
		\draw (180:1) node (v1) [draw,fill] {};
		\draw (300:1) node (v2) [draw,fill] {};
		\draw (420:1) node (v3) [draw,fill] {};
		\draw (-90:1.5) node (name) {\small $A_{1}$};
		\draw[->] (v1) edge (v3);
		\draw[->] (v2) edge (v3);
   \end{tikzpicture}
		\hspace{1cm}
   \begin{tikzpicture}[scale=.5,>=stealth',shorten >=1pt,bend angle = 15]
   \tikzstyle{every node}=[minimum width=0pt, inner sep=2pt, circle]
		\draw (180:1) node (v1) [draw,fill] {};
		\draw (300:1) node (v2) [draw,fill] {};
		\draw (420:1) node (v3) [draw,fill] {};
		\draw (-90:1.5) node (name) {\small $A_{2}$};
		\draw[->] (v3) edge (v1);
		\draw[->] (v3) edge (v2);
   \end{tikzpicture}
		\hspace{1cm}
   \begin{tikzpicture}[scale=.5,>=stealth',shorten >=1pt,bend angle = 15]
   \tikzstyle{every node}=[minimum width=0pt, inner sep=2pt, circle]
		\draw (180:1) node (v1) [draw,fill] {};
		\draw (300:1) node (v2) [draw,fill] {};
		\draw (420:1) node (v3) [draw,fill] {};
		\draw (-90:1.5) node (name) {\small $A_{3}$};
		\draw[->] (v3) edge (v1);
		\draw[->] (v3) edge (v2);
		\draw[->] (v2) edge (v1);
   \end{tikzpicture}
		\hspace{1cm}
   \begin{tikzpicture}[scale=.5,>=stealth',shorten >=1pt,bend angle = 15]
   \tikzstyle{every node}=[minimum width=0pt, inner sep=2pt, circle]
		\draw (180:1) node (v1) [draw,fill] {};
		\draw (300:1) node (v2) [draw,fill] {};
		\draw (420:1) node (v3) [draw,fill] {};
		\draw (-90:1.5) node (name) {\small $A_{4}$};
		\draw[->,bend right] (v1) edge (v3);
		\draw[->,bend right] (v3) edge (v1);
		\draw[->] (v1) edge (v2);
		\draw[->] (v3) edge (v2);
   \end{tikzpicture}
		\hspace{1cm}
   \begin{tikzpicture}[scale=.5,>=stealth',shorten >=1pt,bend angle = 15]
   \tikzstyle{every node}=[minimum width=0pt, inner sep=2pt, circle]
		\draw (180:1) node (v1) [draw,fill] {};
		\draw (300:1) node (v2) [draw,fill] {};
		\draw (420:1) node (v3) [draw,fill] {};
		\draw (-90:1.5) node (name) {\small $A_{5}$};
		\draw[->,bend right] (v1) edge (v3);
		\draw[->,bend right] (v3) edge (v1);
		\draw[->] (v2) edge (v1);
		\draw[->] (v2) edge (v3);
   \end{tikzpicture}
		\hspace{1cm}
   \begin{tikzpicture}[scale=.5,>=stealth',shorten >=1pt,bend angle = 15]
   \tikzstyle{every node}=[minimum width=0pt, inner sep=2pt, circle]
		\draw (180:1) node (v1) [draw,fill] {};
		\draw (300:1) node (v2) [draw,fill] {};
		\draw (420:1) node (v3) [draw,fill] {};
		\draw (-90:1.5) node (name) {\small $A_{6}$};
		\draw[->,bend right] (v1) edge (v2);
		\draw[->,bend right] (v2) edge (v1);
		\draw[->,bend right] (v1) edge (v3);
		\draw[->,bend right] (v3) edge (v1);
		\draw[->,bend right] (v2) edge (v3);
		\draw[->,bend right] (v3) edge (v2);
   \end{tikzpicture}
		\hspace{1cm}
\]
\caption{The digraphs with 3 vertices that have algebraic co-rank equal to $1$.}
\label{fig:AllowedDig1}
\end{figure}
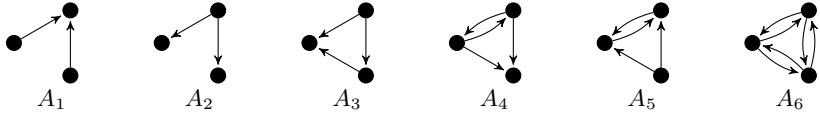

We have that $\overrightarrow{P_2}$ and $\overrightarrow{C_2}$ are the only digraphs with 2 vertices with algebraic co-rank  equal to 1, and in Figure~\ref{fig:AllowedDig1} there are shown the only six digraphs with algebraic co-rank equal to 1.
The result is true for $n\leq3$, since all these digraphs belong to $\Lambda$.

Assume that, for some integer $n\geq 4$, every $\mathfrak F$-free digraph on $n$ vertices is isomorphic to $\Lambda_{n_1,n_2,n_3}$ for some $n_1$, $n_2$ and $n_3$.
Now let $D=(V,A)$ be an $\mathfrak F$-free digraph on $n+1$ vertices, and let $v\in V(D)$.
The digraph $D'=D-v$ is $\mathfrak F$-free, and by inductive hypothesis, $D'$ is isomorphic to $\Lambda_{n_1,n_2,n_3}$ for some integers $n_1$, $n_2$ and $n_3$.
Take three vertices $a,b,c$ in $D'$ such that its underlying graph is connected, and $v$ is adjacent with at least one of the three vertices.
Since $D'$ is isomorphic to $\Lambda$, the induced digraph by the vertices $a,b$ and $c$ is isomorphic to one of the digraphs shown in Figure~\ref{fig:AllowedDig1}. 
We are going to continue with the following claims.

\begin{Claim}\label{claim:complete}
If $D'$ is a complete digraph, then
\begin{enumerate}[i.]
\item each vertex in $D'$ is adjacent to $v$,
\item all arcs between $D'$ and $v$ have the same orientation.
\end{enumerate}
\end{Claim}
\begin{proof}
Since $D$ is connected, then $v$ is adjacent to a vertex $a$ in $D'$.
Take $b\in V(D')$ different from $a$.
Suppose $b$ is not adjacent with $v$, then since there are three possibilities in which $a$ and $v$ are adjacent: either $(av)$, $(va)$, or $av$, that is, both $(av)$ and $(va)$ exist.
However, these possibilities cannot be, since the induced digraph by $a,b$ and $v$ would be isomorphic to $F_{3,2}$, $F_{3,3}$ or $F_{3,4}$.
Thus, each vertex in $D'$ is adjacent to $v$.
Similarly, all arcs between $D'$ and $v$ have the same orientation, otherwise $D'$ would contain an induced subgraph isomorphic to $F_{3,6}$ or $F_{3,7}$.
\end{proof}

\begin{Remark}\label{remark:caminotriangulo}
	Let $a,b,c\in D'$ such that $(ab),(bc)\in A(D')$ and $(ba),(cb)\notin A(D')$.
	If $a$ and $c$ are adjacent, then $(ac)$ is the only possible arc between both vertices.
\end{Remark}

\begin{Claim}\label{claim:casoA1}
	Assume that the unique arcs in $D'[a,b,c]$ are $(ac)$ and $(bc)$.
	Then, $(\{a,b,c\},v)$ is equal to one of the following arcs sets.
	\begin{enumerate}[i.]
		\item $\{(av),(bv)\}$,
		\item $\{(av), (bv), (vc)\}$,
		\item $\{(av), (bv), (cv)\}$,
		\item $\{(av), (bv), vc\}$,
		\item $\{(vc)\}$.
	\end{enumerate}
\end{Claim}
\begin{proof}
	Suppose $(va)$ exists.
	Then, by Remark \ref{remark:caminotriangulo}, $(vc)$ exits.
	There must exists an arc between $b$ and $v$, otherwise $D'[a,b,c,v]$ would be isomorphic to $F_{4,2}$.
	The arcs $vb$ or $(bv)$ cannot exist, otherwise $(ba)$ should exits.
	Finally, it is impossible that the arc $(vb)$ exits, otherwise $D'[a,b,c,v]$ would be isomorphic to $F_{4,6}$.
	Thus, $(va)\notin E(D')$.
	
	Now, suppose $va\in E(D')$.
	By Claim \ref{claim:complete}, $(vc)\in E(D')$.
	And, there should exists an arc between $v$ and $b$, otherwise $D'[a,b,c,v]$ would be isomorphic to $F_{4,4}$.
	But, since $va$ is complete, then by Claim \ref{claim:complete} there is an arc between $v$ and $b$ if and only if there is an arc between $a$ and $b$; which is impossible.
	Then, $va\notin E(D')$.
	
	Suppose $(av)$ exists.
	Neither $(vb)$ or $vb$ exists, otherwise $(ab)$ should exists; which is not possible.
	Considering $(bv)\in E(V')$, the cases $i$, $ii$, $iii$ and $iv$ are yielded.
	
	The cases between $v$ and $b$ can be analyzed similarly.
	Now, we assume there exists no arc between $v$ and the vertices $a$ and $b$.
	There is not possible that $(cv)$ or $cv$ exist, otherwise $(av)$ and $(bv)$ should exist.
	The last case is that $vc\in E(D')$, which corresponds to case $v$.
\end{proof}

\begin{Claim}\label{claim:casoA2}
	Assume that the unique arcs in $D'[a,b,c]$ are $(ca)$ and $(cb)$.
	Then, $(\{a,b,c\},v)$ is equal to one of the following arcs sets.
	\begin{enumerate}[i.]
		\item $\{(va),(vb)\}$,
		\item $\{(va), (vb), (vc)\}$,
		\item $\{(va), (vb), (cv)\}$,
		\item $\{(va), (vb), vc\}$,
		\item $\{(cv)\}$.
	\end{enumerate}
\end{Claim}
\begin{proof}
	Suppose $(va)$ exists.
	There must exists an arc between $v$ and one of the vertices $b$ or $c$, otherwise $D'[a,b,c,v]$ would be isomorphic to $F_{4,1}$.
	Considering $(vb)\in E(V')$, the cases $i$, $ii$, $iii$ and $iv$ are yielded.
	On the other hand, the arcs $vb$ or $(bv)$ cannot exist, otherwise $(ba)$ should exits.
	
	Suppose $va\in E(D')$.
	By Claim \ref{claim:complete}, $(cv)\in E(D')$.
	And, there should exists an arc between $v$ and $b$, otherwise $D'[a,b,c,v]$ would be isomorphic to $F_{4,5}$.
	On the other hand, there is no arc between $v$ and $b$, because there should exist an arc between $a$ and $b$; which is impossible.
	Then, $va\notin E(D')$.
	
	Suppose $(av)$ exists.
	Then, by Remark \ref{remark:caminotriangulo}, $(cv)\in E(D')$.
	There should exists an arc between $v$ and $b$, otherwise $D'[a,b,c,v]$ would be isomorphic to $F_{4,3}$.
	Neither $(vb)$ or $vb$ exists, otherwise $(ba)$ should exists; which is not possible.
	Considering $(bv)\in E(V')$, $D'[a,b,c,v]$ would be isomorphic to $F_{4,6}$; which is impossible.
	Thus, $(av)\notin E(D')$.

	The cases between $v$ and $b$ can be analyzed similarly.
	Now, suppose there exists no arc between $v$ and the vertices $a$ and $b$.
	There is not possible that any of the arcs $(vc)$ or $vc$ exist, otherwise $(va)$ and $(vb)$ should exist.
	Finally, $cv\in E(D')$, which corresponds to case $v$.
\end{proof}

\begin{Claim}\label{claim:casoA3}
	Assume that the unique arcs in $D'[a,b,c]$ are $(ab)$, $(ac)$ and $(bc)$.
	Then, $(\{a,b,c\},v)$ is equal to one of the following arcs sets.
	\begin{enumerate}[i.]
		\item $\{(av), vb, (vc)\}$,
		\item $\{(vb), (vc)\}$,
		\item $\{(av), (bv)\}$.
	\end{enumerate}
\end{Claim}
\begin{proof}
	The arcs $(va)$ or $va$ are not in $E(D')$.
	Since otherwise $(vb)$ and $vc$ should exist, then $D'[a,b,c,v]$ would be isomorphic to $F_{4,7}$ and $F_{4,8}$, respectively.
	Which is impossible.
		
	Suppose $(av)$ exists.
	Then, there must exists an arc between $v$ and one of the vertices $b$ or $c$, otherwise $D'[a,b,c,v]$ would be isomorphic to $F_{4,3}$, which is forbidden.
	The arc $(vb)$ cannot exists, since otherwise it would imply that the arc $(vc)$ should exists, but in this case $D'[a,b,c,v]$ would be isomorphic to $F_{4,7}$; which is not possible.
	In the case when $(bv)\in E(V')$, we have 4 possibilities for $v$ and $c$: $(vc)$, $vc$, $(cv)$ or $v$ is not adjacent with $c$.
	The arcs $(vc)$, $vc$ and $(cv)$ are not possible, since otherwise $D'[a,b,c,v]$ would be isomorphic to $F_{4,7}$, $F_{4,10}$ and $F_{4,7}$, respectively.
	In the case when $v$ is not adjacent with $c$, the arc set $(\{a,b,c\},v)$ corresponds to case $iii$.
	Finally, when the arc $vb$ exists, the arc $(vc)$ exists, and this corresponds to case $i$.
	
	Now, suppose there is no arc between $v$ and $a$.
	If $(vb)\in E(V')$, then $(vc)\in E(V')$ and this case corresponds to $ii$.
	The cases when the arcs $(bv)$ or $vb$ exist, there must exists an arc between $a$ and $v$, which is not the case, therefore these cases are not possible.
	
	Now, suppose there is no arc between $v$ and the vertices $a$ and $b$.
	The three possibilities for the arc between $v$ and $c$ are not possible, since otherwise $F_{4,2}$, $F_{3,1}$ or $F_{3,2}$ would appear.
\end{proof}

\begin{Claim}\label{claim:casoA4}
	Assume that the unique arcs in $D'[a,b,c]$ are $ab$, $(ac)$ and $(bc)$.
	Then, $(\{a,b,c\},v)$ is equal to one of the following arcs sets.
	\begin{enumerate}[i.]
		\item $\{(av), (bv)\}$,
		\item $\{av, bv, (vc)\}$,
		\item $\{(va), (vb), (vc)\}$.
	\end{enumerate}
\end{Claim}
\begin{proof}
	Suppose $(av)\in E(V')$.
	Then, $(bv)$ also is in $E(V')$.
	And there is no arc between $v$ and $c$, otherwise $D'[a,b,c,v]$ would be isomorphic to $F_{4,8}$ or $F_{4,9}$.
	Now, suppose $av\in E(V')$.
	Then, $bv$ and $vc$ are also in $E(V')$, and this case corresponds to $ii$.
	Finally, suppose $(va)\in E(V')$.
	Then, Claim \ref{claim:complete} implies that $(vb)\in E(V')$, and by Remark \ref{remark:caminotriangulo} $(vc)$ also is in $E(V')$.
	This case corresponds to $iii$.
	
	The cases where $v$ and $b$ are adjacent can be analyzed similarly to the previous cases.
	Then, let us continue with the case where there are no arcs between $v$ and the vertices $a$ and $b$.
	That is, when only one of the arcs $(vc)$, $vc$ or $cv$ exists.
	These cases are impossible, since otherwise $D'[a,b,c,v]$ would be isomorphic to $F_{4,4}$, $F_{3,2}$ or $F_{3,1}$, respectively.
\end{proof}

\begin{Claim}\label{claim:casoA5}
	Assume that the unique arcs in $D'[a,b,c]$ are $ab$, $(ca)$ and $(cb)$.
	Then, $(\{a,b,c\},v)$ is equal to one of the following arcs sets.
	\begin{enumerate}[i.]
		\item $\{(va), (vb)\}$,
		\item $\{av, bv, (cv)\}$,
		\item $\{(av), (bv), (cv)\}$.
	\end{enumerate}
\end{Claim}
\begin{proof}
	Note that if $(va)$ or $(vb)$ exists, then by Claim \ref{claim:complete} both arcs must exist.
	And there is no arc between $v$ and $c$, since otherwise $D'[a,b,c,v]$ would be isomorphic to $F_{4,9}$, or $F_{4,10}$.
	Thus, $(\{a,b,c\},v)$ is equal to case $i$.
	If $va$ or $vb$ exists, then by Claim \ref{claim:complete} both arcs must exist.
	And by the same claim, $cv$ is in $E(V')$. 
	Then, $(\{a,b,c\},v)$ is equal to case $ii$.
	Finally, if $(av)$ or $(bv)$ exists, then by Claim \ref{claim:complete} both arcs must exist.
	And by Remark \ref{remark:caminotriangulo} $(cv)$ also is in $E(V')$.
	Therefore, $(\{a,b,c\},v)$ is equal to case $iii$.
	
	Now, let us assume there are no arcs between $v$ and the vertices $a$ and $b$, and only one of the arcs $(vc)$, $vc$ or $cv$ exists.
	The cases when one of the arcs $vc$ or $cv$ exists are impossible, since otherwise $D'[a,b,c,v]$ would be isomorphic to $F_{3,1}$ or $F_{4,5}$, respectively.
	And finally, $vc$ cannot exist, since otherwise by Claim \ref{claim:complete} $v$ should be adjacent to $a$ and $b$.
\end{proof}

By the previous claims, $v$ must be included into one of the three partitions of $D'$: $T$, $T'$ or $K$.
Therefore, $D$ belongs is isomorphic to a digraph in $\Lambda$.
\end{proof}

\section{Applications to the Critical group and the Smith group}\label{CharacterizationOfInvariantFactors1}
Originally, critical ideals were defined as a generalization of the critical group, see \cite{alfaval, alfacorrval, corrval}.
However, it is also a generalization of several other algebraic objects like Smith group or characteristic polynomials of the adjacency and Laplacian matrices.

The characterization of the family of simple connected (di)graphs having critical group with $i$ invariant factors equal to $1$ has been of great interest.
Probably, it was initially posed by R. Cori\footnote{Personal communication with C. Merino}.
However, the first result appeared  when D. Lorenzini noticed in \cite{lorenzini1991} that the graphs having critical group with one invariant factor equal to 1 consist only of the complete graphs. 
After, C. Merino in \cite{merino} posed interest on the characterization of for the cases with $2$ and $3$ invariant factors equal to 1. 
In this sense, few attempts have been done. 
For instance, in \cite{pan} it was characterized the graphs having critical group with 2 invariant factors equal to 1 whose third invariant factor is equal to $n$, $n-1$, $n-2$, or $n-3$. 
In \cite{chan} the characterizations of the same graphs but with a cut vertex, and with number of independent cycles equal to $n-2$ are given.
Recently, a complete characterization of the graphs having critical group with two invariant factors equal to 1 was obtained in \cite{alfaval}.

In this section, we will focus in giving a characterization of the digraphs whose critical group has one invariant factor equal to 1, and the characterization of the digraphs whose Smith group has one invariant factor equal to one.

Let us recall the concepts of adjacency matrix, Laplacian matrix, Smith group and critical group.
The {\it adjacency matrix} $A(D)$ and {\it Laplacian matrix} $L(D)$ of $D$ are the evaluation of $-L(D,X_D)$ and $L(D,X_D)$ at $X={\bf 0}$ and at $X=\deg^+(D)$, respectively, where $\deg^+(D)$ is the out-degree vector of $D$.
By considering a $m\times n$ matrix $M$ with integer entries as a linear map $M:\mathbb{Z}^n\rightarrow \mathbb{Z}^m$, the {\it cokernel} of $M$ 
is the quotient module $\mathbb{Z}^{n}/{\rm Im}\, M$. 
This finitely generated abelian group becomes a graph invariant when we take the matrix $M$ to be the adjacency or Laplacian matrix of the graph.

The cokernel of $A(D)$ is known as the {\it Smith group} of the digraph $D$ and is denoted $S(D)$, and the torsion part of the cokernel of $L(D)$ is knwon as the {\it critical group} $K(D)$ of $D$.
For a survey on Smith groups see \cite{rushanan}.
The critical group is especially interesting; for connected graphs since its order is equal to the number of spanning trees of the graph.
The critical group has been studied intensively over the last 30 years on several contexts: the {\it group of components} \cite{lorenzini1991,lorenzini2008}, the {\it Picard group} \cite{bhn,biggs1999}, the {\it Jacobian group} \cite{bhn,biggs1999}, the {\it sandpile group} \cite{cori},  {\it chip-firing game} \cite{biggs1999,merino}, or {\it Laplacian unimodular equivalence} \cite{gmw,merris}.

One way to compute the cokernel of $M$ is by means of the Smith normal form and its associated invariant factors, see \cite[Theorem 1.4]{lorenzini1989}.
Thus, the cokernel of $M$ can be described as:
$coker(M)\cong \mathbb{Z}_{f_1} \oplus \mathbb{Z}_{f_2} \oplus \cdots \oplus\mathbb{Z}_{f_{r}}$,
where $f_1, f_2, ..., f_{r}$ are positive integers with $f_i \mid f_j$ for all $i\leq j$.
These integers are called {\it invariant factors} of $M$.
We might refer the reader to the Stanley's survey  \cite{stanley} on the Smith normal forms in combinatorics for more details in the topic.

Computation of the Smith normal form of the adjacency or Laplacian matrix is a standard technique to determine the Smith or critical group of a graph. 
It is well known that this can be achieved through integral row and column operations.
One more way (see \cite{alfaval0,DGW}) of determining the structure is by working directly within the particular abelian group to identify a cyclic decomposition for it. 
For it, we need to know the orders of the group, exhibit a set of elements and show that their orders divide the orders of the cyclic
factors, and finally, show these elements do indeed generate the group.

An alternative in computing the invariant factors is by means of the following formula (see \cite{jacobson}). 
If $\Delta_i(M)$ is the {\it greatest common divisor} of the $i$-minors of the matrix $M$, then the $i$-{\it th} invariant factor $f_i$ is equal to $\Delta_i(M)/ \Delta_{i-1}(M)$, where $\Delta_0(M)=1$.
The following result uses this formula to build a bridge between the critical groups and critical ideals.

\begin{Proposition}\cite{corrval}\label{teo:eval}
If $\deg^+(D)=(\deg^+_D(v_1), ..., \deg^+_D(v_n))$ is the out--degree vector of $D$, and $f_1\mid\cdots\mid f_{n-1}$ are the invariant factors of $K(D)$, then
\[
I_i(D,X_D)|_{X_D=\deg^+(D)}=\left< \prod_{j=1}^{i} f_j \right> =\left< \Delta_i(L(D))\right>\text{ for all }1\leq i\leq n-1.
\]
\end{Proposition}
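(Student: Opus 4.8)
The plan is to read off the statement directly from the definition of the critical ideal together with the classical relation between determinantal divisors and invariant factors, by proving the two identities $I_i(D,X_D)|_{X_D=\deg^+(D)}=\langle \Delta_i(L(D))\rangle$ and $\langle \Delta_i(L(D))\rangle=\langle \prod_{j=1}^{i} f_j\rangle$, and then chaining them (in the order displayed in the statement). Throughout, write $\phi\colon\mathbb{Z}[X_D]\to\mathbb{Z}$ for the evaluation homomorphism sending each indeterminate $x_{v_j}$ to $\deg^+_D(v_j)$; this is precisely the specialization appearing in the statement, and, entry by entry, it carries $L(D,X_D)$ to $L(D)$, since the diagonal entries $x_{v_j}$ become $\deg^+_D(v_j)$ while the off-diagonal entries $-m_{uv}$ are untouched.

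The first identity is essentially formal. Since $\phi$ is a surjective ring homomorphism, the image $\phi(I)$ of any ideal $I\subseteq\mathbb{Z}[X_D]$ is an ideal of $\mathbb{Z}$, generated by the $\phi$-images of any generating set of $I$. I would apply this to $I=I_i(D,X_D)$, whose generators are the $i\times i$ minors of $L(D,X_D)$; as the determinant is a polynomial expression, $\phi$ commutes with forming minors, so $I_i(D,X_D)|_{X_D=\deg^+(D)}$ is the ideal of $\mathbb{Z}$ generated by all $i\times i$ minors of $L(D)$. Because $\mathbb{Z}$ is a principal ideal domain, this ideal equals $\langle g\rangle$, where $g$ is the greatest common divisor of those minors, that is, $g=\Delta_i(L(D))$ by definition.

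For the second identity I would invoke the determinantal-divisor formula recalled just before the statement: the $j$-th invariant factor is $f_j=\Delta_j(L(D))/\Delta_{j-1}(L(D))$ with $\Delta_0(L(D))=1$, and these are precisely the invariant factors of $K(D)$ because $K(D)$ is by definition the torsion part of the cokernel of $L(D)$. Telescoping the product gives $\prod_{j=1}^{i} f_j=\Delta_i(L(D))$ for all $1\le i\le n-1$, which together with the first identity yields the statement. The proof is short, and there is no serious obstacle; the only points demanding care are bookkeeping ones --- that ``evaluating the ideal'' has to mean the ideal generated by the evaluated generators, which is exactly why surjectivity of $\phi$ is needed to make this independent of the chosen generators, and that the list $f_1\mid\cdots\mid f_{n-1}$ is read off the matrix $L(D)$ (equivalently, after discarding trivial factors, off the finite group $K(D)$), so that the determinantal-divisor identity applies verbatim.
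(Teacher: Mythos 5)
The paper does not prove this proposition at all: it is quoted from \cite{corrval} and used as a black box, so there is no in-paper argument to compare yours against. Your reconstruction is the standard one and is correct: evaluation at $\deg^+(D)$ is a surjective ring homomorphism $\mathbb{Z}[X_D]\to\mathbb{Z}$ carrying $L(D,X_D)$ to $L(D)$ entry by entry, hence carrying the ideal of $i$-minors to the ideal of $\mathbb{Z}$ generated by the $i$-minors of $L(D)$, which is $\langle\Delta_i(L(D))\rangle$ because $\mathbb{Z}$ is a PID; and the telescoping of $f_j=\Delta_j(L(D))/\Delta_{j-1}(L(D))$ gives the other equality. Both steps are sound, and you correctly identify that surjectivity is what makes ``the evaluated ideal'' well defined independently of the chosen generators.

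One caveat worth making explicit, since you only gesture at it in your closing bookkeeping remark: the telescoping identity $\prod_{j=1}^{i}f_j=\Delta_i(L(D))$ holds verbatim for the invariant factors of the \emph{matrix} $L(D)$ (read off its Smith normal form, zeros included). Identifying the first $n-1$ of these with ``the invariant factors of $K(D)$,'' the torsion part of the cokernel, silently assumes $\operatorname{rank}L(D)=n-1$, i.e.\ $\Delta_{n-1}(L(D))\neq 0$. For undirected connected graphs this is automatic, but for digraphs it can fail (e.g.\ a vertex with two out-neighbours of out-degree zero already gives $\Delta_{n-1}(L(D))=0$ while $K(D)$ is trivial), in which case the chain of equalities must be read with $f_j=0$ for $j$ beyond the rank rather than with the invariant factors of the finite group $K(D)$. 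This is a defect (or implicit convention) of the cited statement itself rather than of your proof, but your parenthetical ``after discarding trivial factors'' papers over exactly the case where the two readings diverge, so you should either add the rank hypothesis or state that $f_1,\dots,f_{n-1}$ denote the matrix invariant factors throughout.
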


Thus, if the critical ideal $I_i(D,X_D)$ is trivial, then $\Delta_i(L(D))$ and $f_i$ are equal to $1$.
Equivalently, if $\Delta_i(L(D))$ and $f_i$ are not equal to $1$, then the critical ideal $I_i(D, X_D)$ is not trivial.
The counterpart to the algebraic co-rank, in the critical group, is the number $f_1(D)$ of invariant factors equal to 1.
\[\small
\mathcal{G}_i=\{D\, :\, D \text{ is a simple connected digraph with } f_1(D)=i\}.
\]
Moreover, $\mathcal{G}_i\subseteq \Gamma_{\leq i}$ for all $i\geq 0$.
By Proposition \ref{teo:eval}, to obtain the classification of the digraphs whose critical group has exactly one invariant factor equal to one, we only need to evaluate the out-degree of the vertices in the second critical ideal corresponding to each digraph in Lemma \ref{lemma:main}.

\begin{Corollary}
The critical group of a connected digraph has exactly one invariant factor equal to 1 if and only if is isomorphic to the digraph $\Lambda_{n_1,n_2,n_3}$ where $n_1,n_2,n_3$ satisfy one of the following conditions.
\begin{multicols}{2}\small
\begin{itemize}
\item $n_1,n_2,n_3\geq1$,
\item $n_1,=n_2=1$, $n_3=0$,
\item $n_1,=n_3=1$, $n_2=0$,
\item $n_2,=n_3=1$, $n_1=0$,
\item $n_1\geq0$, $n_2\geq2$, $n_3\geq0$,
\item $n_1=0$, $n_2=1$, $n_3\geq2$,
\item $n_1=1$, $n_2=0$, $n_3\geq2$,
\item $n_1\geq2$, $n_2=0$, $n_3\geq2$.
\end{itemize}
\end{multicols}
\end{Corollary}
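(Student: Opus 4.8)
The plan is to bootstrap from Theorem~\ref{teo:main}. A critical group with exactly one invariant factor equal to $1$ is precisely the condition $D\in\mathcal G_1$, and since $\mathcal G_1\subseteq\Gamma_{\leq1}$, Theorem~\ref{teo:main} forces $D$ to be isomorphic to some $\Lambda_{n_1,n_2,n_3}$. So the whole problem collapses to deciding which connected $\Lambda_{n_1,n_2,n_3}$ (necessarily with $n_1+n_2+n_3\geq2$) enjoy this property, and here I would use Proposition~\ref{teo:eval} to convert the question into one about the second critical ideal. Indeed, such a digraph has an arc, hence a $(-1)$ off the diagonal of $L$, so $\Delta_1(L)=1$ and $f_1=1$ automatically; moreover $\det L=0$ because the rows of $L$ sum to zero. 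Therefore ``exactly one invariant factor equal to $1$'' is equivalent to $\Delta_2(L)\neq1$, which by Proposition~\ref{teo:eval} is equivalent to
\[
I_2(\Lambda_{n_1,n_2,n_3},\{X_T,Y_K,Z_{T'}\})\big|_{X_D=\deg^+(D)}\neq\langle1\rangle.
\]

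Next I would write down the out-degree vector of $\Lambda_{n_1,n_2,n_3}$: a vertex of $T$ has out-degree $n_2+n_3$, a vertex of $K$ has out-degree $(n_2-1)+n_3$, and a vertex of $T'$ has out-degree $0$; so evaluating at $\deg^+(D)$ means substituting $x_i\mapsto n_2+n_3$, $y_i\mapsto n_2+n_3-1$ (hence $y_i+1\mapsto n_2+n_3$), and $z_i\mapsto0$ into the thirteen explicit formulas for $I_2$ in Lemma~\ref{lemma:main}. Each evaluation is routine: whenever a generator $z_i$ or a generator $y_iy_j-1$ survives (cases such as $n_1=n_2=1,n_3=0$, or $n_2=n_3=1,n_1=0$, or $n_1=0,n_2=2,n_3=0$) the ideal becomes $\langle0\rangle$, hence nontrivial; the families $n_1,n_2,n_3\geq1$ and $n_1=0,n_2\geq2,n_3\geq1$ give $\langle n_2+n_3\rangle$, the family $n_1\geq1,n_2\geq2,n_3=0$ gives $\langle n_2\rangle$, the family $n_1=0,n_2\geq3,n_3=0$ gives $\langle n_2^2\rangle$, and the family $n_1\geq2,n_2=0,n_3\geq2$ gives $\langle n_3\rangle$, all nontrivial under the indicated bounds; while the two remaining families $n_1\geq2,n_2=1,n_3=0$ and $n_1\geq2,n_2=0,n_3=1$ give $\langle n_2+n_3\rangle=\langle1\rangle$ and so drop out. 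Collecting the surviving triples, and noting that the cases with $n_2\geq2$ (namely $n_1\geq1,n_2\geq2,n_3=0$; $n_1=0,n_2=2,n_3=0$; $n_1=0,n_2\geq3,n_3=0$; $n_1=0,n_2\geq2,n_3\geq1$) together with the $n_2\geq2$ portion of the case $n_1,n_2,n_3\geq1$ merge into the single clause $n_1\geq0,\,n_2\geq2,\,n_3\geq0$, one arrives at exactly the eight conditions in the statement.

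I do not expect a genuine obstacle here, since all the substantive content is already packaged in Theorem~\ref{teo:main} and Proposition~\ref{teo:eval}. The one place requiring care is bookkeeping: checking that the thirteen cases of Lemma~\ref{lemma:main} genuinely exhaust every connected $\Lambda_{n_1,n_2,n_3}$ with $n_1+n_2+n_3\geq2$ (recalling that $\Lambda_{n_1,n_2,n_3}$ is connected precisely when $n_2\geq1$, or $n_2=0$ with $n_1,n_3\geq1$), and that the union of the eight surviving conditions coincides with the displayed list without gaps. A one-line remark could also be added observing that this recovers Lorenzini's fact that $K_n=\Lambda_{0,n,0}$ has one invariant factor equal to $1$.
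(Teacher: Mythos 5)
Your proposal is correct and follows essentially the same route as the paper: reduce to $\Lambda_{n_1,n_2,n_3}$ via $\mathcal G_1\subseteq\Gamma_{\leq1}$ and Theorem~\ref{teo:main}, then substitute the out-degrees $x_i\mapsto n_2+n_3$, $y_i\mapsto n_2+n_3-1$, $z_i\mapsto 0$ into the thirteen cases of Lemma~\ref{lemma:main} and observe that only $n_1\geq2,n_2=1,n_3=0$ and $n_1\geq2,n_2=0,n_3=1$ trivialize. Your case-by-case evaluations all check out; you have simply written out the bookkeeping that the paper leaves implicit.
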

\begin{proof}
	We have that the out-degree of a vertex $v$ in $\Lambda_{n_1,n_2,n_3}$ is as follows.
	\[
		\deg^+(v)=
		\begin{cases}
			n_2+n_3 & \text{if } v\in T_{n_1},\\
			n_2+n_3-1 & \text{if } v\in K_{n_2},\\
			0 & \text{if } v\in T'_{n_3}.\\
		\end{cases}
	\]
	By evaluating each indeterminate at its associated out-degree in the second critical ideal of Lemma \ref{lemma:main}, we obtain that the only cases that the ideal trivializes are $n_1\geq2,n_2=1,n_3=0$ and $n_1\geq2,n_2=0,n_3=1$.
\end{proof}

Similar arguments used in \cite{corrval} give us the following result for the Smith group.

\begin{Proposition}\label{teo:eval1}
If $f_1\mid\cdots\mid f_{n-1}$ are the invariant factors of $S(D)$, then
\[
I_i(D,X_D)|_{X_D=\bf 0}=\left< \prod_{j=1}^{i} f_j \right> =\left< \Delta_i(A(D))\right>\text{ for all }1\leq i\leq n-1.
\]
\end{Proposition}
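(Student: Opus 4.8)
The plan is to follow verbatim the structure of the proof of Proposition~\ref{teo:eval} in \cite{corrval}, with the Laplacian evaluation $X_D=\deg^+(D)$ replaced by $X_D=\mathbf 0$ and $L(D)$ replaced by $A(D)$. The starting point is the definitional identity $L(D,X_D)|_{X_D=\mathbf 0}=-A(D)$.

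First I would observe that substituting $X_D=\mathbf 0$ is a ring homomorphism $\varphi\colon\mathbb Z[X_D]\to\mathbb Z$, and that such a substitution commutes with the formation of determinants: for each choice of $i$ rows and $i$ columns, if $m$ denotes the corresponding $i\times i$ submatrix of $L(D,X_D)$, then $\varphi(\det m)=\det(\varphi(m))$, and $\varphi(m)$ is exactly the corresponding $i\times i$ submatrix of $-A(D)$, so $\varphi(\det m)=(-1)^i$ times the corresponding $i\times i$ minor of $A(D)$. Since $\varphi$ is surjective, the evaluated ideal $I_i(D,X_D)|_{X_D=\mathbf 0}$ is the ideal of $\mathbb Z$ generated by the images under $\varphi$ of the generators of $I_i(D,X_D)$, hence the ideal generated by all $i\times i$ minors of $A(D)$; the overall sign $(-1)^i$ is a unit and can be dropped. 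Because $\mathbb Z$ is a principal ideal domain, this ideal is generated by the greatest common divisor of those minors, i.e.\ it equals $\langle\Delta_i(A(D))\rangle$.

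It then remains to identify $\Delta_i(A(D))$ with $\prod_{j=1}^i f_j$. For this I would invoke the classical formula recalled before the statement (see \cite{jacobson}): with $\Delta_0(A(D))=1$, the $i$-th invariant factor of $\mathrm{coker}\,A(D)=S(D)$ is $f_i=\Delta_i(A(D))/\Delta_{i-1}(A(D))$, so telescoping gives $\Delta_i(A(D))=\prod_{j=1}^i f_j$ for $1\le i\le n-1$. Combining the three equalities yields the proposition.

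I do not expect a real obstacle here; the argument is formal. The only point to state with care is what ``evaluating an ideal'' means --- namely that one may take as generators of $I_i(D,X_D)|_{X_D=\mathbf 0}$ the $\varphi$-images of the generating minors of $I_i(D,X_D)$, which is legitimate precisely because $\varphi$ is onto --- together with the remark that the sign discrepancy between $L(D,\mathbf 0)$ and $A(D)$ is invisible at the level of ideals. As in Proposition~\ref{teo:eval}, restricting to $1\le i\le n-1$ sidesteps any issue with a vanishing top minor and suffices for the applications in this section.
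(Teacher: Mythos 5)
Your argument is correct and is exactly what the paper intends: the paper gives no written proof of Proposition~\ref{teo:eval1}, saying only that ``similar arguments used in \cite{corrval}'' (i.e.\ the evaluation argument behind Proposition~\ref{teo:eval}) apply, and your write-up simply makes that evaluation-plus-Smith-normal-form argument explicit, including the harmless sign $(-1)^i$ coming from $L(D,\mathbf 0)=-A(D)$. No gaps.
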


By evaluating each indeterminate at zero in the second critical ideal of Lemma \ref{lemma:main}, we obtain the characterization.

\begin{Corollary}
The Smith group of a connected digraph has exactly one invariant factor equal to 1 if and only if is isomorphic to the digraph $\Lambda_{n_1,n_2,n_3}$ where $n_1,n_2,n_3$ satisfy one of the following conditions.
\begin{multicols}{2}\small
\begin{itemize}
\item $n_1,=n_2=1$, $n_3=0$,
\item $n_1,=n_3=1$, $n_2=0$,
\item $n_2,=n_3=1$, $n_1=0$,
\item $n_1\geq2$, $n_2=1$, $n_3=0$,
\item $n_1=0$, $n_2=1$, $n_3\geq2$,
\item $n_1=1$, $n_2=0$, $n_3\geq2$,
\item $n_1\geq2$, $n_2=0$, $n_3=1$,
\item $n_1\geq2$, $n_2=0$, $n_3\geq2$.
\end{itemize}
\end{multicols}
\end{Corollary}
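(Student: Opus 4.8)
The plan is to mirror, for the Smith group, the argument that produced the critical-group corollary: combine Theorem~\ref{teo:main} with Proposition~\ref{teo:eval1}. First I would record the Smith-group analogue of $\mathcal{G}_i\subseteq\Gamma_{\le 1}$. By Proposition~\ref{teo:eval1} one has $\Delta_i(A(D))=I_i(D,X_D)|_{X_D=\mathbf 0}$ for every $i$, so if $I_i(D,X_D)$ is a trivial ideal then $\Delta_i(A(D))=1$ and hence $f_i=1$. Consequently, a connected digraph whose Smith group has exactly one invariant factor equal to $1$ satisfies $\gamma(D)\le 1$: from $f_1=1$ and $f_2\neq 1$ we get $\Delta_2(A(D))=f_1f_2\neq 1$, so $I_2(D,X_D)$ is not trivial, and since the trivial critical ideals of a digraph form an initial segment this caps $\gamma(D)$ at $1$. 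Every connected digraph on at least two vertices has an arc, hence $I_1(D,X_D)$ is trivial and $\gamma(D)\ge 1$; and $T_1$ is ruled out because $S(T_1)\cong\mathbb{Z}$ has no invariant factor equal to $1$. So $\gamma(D)=1$, and Theorem~\ref{teo:main} forces $D\cong\Lambda_{n_1,n_2,n_3}$ for some $n_1,n_2,n_3$ with $n_1+n_2+n_3\ge 2$.

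For the converse I would fix $D=\Lambda_{n_1,n_2,n_3}$; by Lemma~\ref{lemma:main} and Theorem~\ref{teo:main} it has $\gamma(D)=1$, and since $A(D)$ has a $1$-entry we get $\Delta_1(A(D))=1$, i.e.\ $f_1=1$. Thus the Smith group of $\Lambda_{n_1,n_2,n_3}$ has exactly one invariant factor equal to $1$ if and only if $f_2\neq 1$, i.e.\ if and only if $\Delta_2(A(D))=I_2(\Lambda_{n_1,n_2,n_3},\{X_T,Y_K,Z_{T'}\})|_{X_T=Y_K=Z_{T'}=\mathbf 0}\neq\langle 1\rangle$ by Proposition~\ref{teo:eval1}. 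The remaining task is then purely computational: evaluate the thirteen second critical ideals displayed in Lemma~\ref{lemma:main} at $\mathbf 0$, noting that $x_i,z_i\mapsto 0$, $y_i+1\mapsto 1$, $y_iy_j-1\mapsto -1$ and $(y_i+1)(y_j+1)\mapsto 1$. An ideal becomes $\langle 1\rangle$ precisely when it has a generator involving none of the $x_i,z_i$ — namely in the cases $n_1,n_2,n_3\ge 1$; $\ n_1\ge1,n_2\ge2,n_3=0$; $\ n_1=0,n_2=2,n_3=0$; $\ n_1=0,n_2\ge3,n_3=0$; $\ n_1=0,n_2\ge2,n_3\ge1$ — and otherwise it becomes $\langle 0\rangle\neq\langle 1\rangle$. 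Reading off the surviving cases yields exactly the eight families in the statement; this is the exact mirror of the computation behind the critical-group corollary, with $\deg^+$ replaced by $\mathbf 0$.

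I do not anticipate a genuine difficulty: all the substance sits in Lemma~\ref{lemma:main} and Proposition~\ref{teo:eval1}, and the case check is mechanical. The one point deserving care is the reduction above — guaranteeing that having a single invariant factor equal to $1$ really forces $\gamma(D)\le 1$ rather than merely a weaker bound — which rests on the structural fact, from the critical-ideal framework of \cite{corrval}, that the trivial critical ideals of a digraph are exactly $I_1,\dots,I_{\gamma(D)}$. Once that is in hand, the rest is routine.
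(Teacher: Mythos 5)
Your proposal is correct and follows essentially the same route as the paper: reduce to the digraphs $\Lambda_{n_1,n_2,n_3}$ via Theorem~\ref{teo:main}, then use Proposition~\ref{teo:eval1} to evaluate the thirteen second critical ideals of Lemma~\ref{lemma:main} at $X_D=\mathbf 0$ and keep exactly the cases where the evaluated ideal is not $\langle 1\rangle$; your case check agrees with the eight families listed. You merely make explicit the scaffolding the paper leaves implicit (that one invariant factor equal to $1$ forces $\gamma(D)\le 1$, and that $f_1=1$ holds automatically), which is a welcome clarification but not a different argument.
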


\appendix
\section{SAGE code for verifying $\mathfrak F\subseteq {\mathbf{ Forb}}( \Gamma_{ \leq 1 } )$}\label{sec:FinForb}

\begin{lstlisting}
# Forbidden digraphs with 3 vertices
F3 = [];
F3.append(DiGraph({0:[2],2:[1]}, name='F31'));
F3.append(DiGraph({0:[2],1:[2],2:[0]}, name='F32'));
F3.append(DiGraph({0:[2],2:[0,1]}, name='F33'));
F3.append(DiGraph({0:[2],1:[2],2:[0,1]}, name='F34'));
F3.append(DiGraph({0:[1],1:[2],2:[0]}, name='F35'));
F3.append(DiGraph({0:[1,2],1:[0],2:[1]}, name='F36'));
F3.append(DiGraph({0:[1,2],1:[0,2],2:[0]}, name='F37'));

# Forbidden digraphs with 4 vertices
F4 = [];
F4.append(DiGraph({0:[2,3],1:[3]}, name='F41'));
F4.append(DiGraph({0:[2,3],1:[3],2:[3]}, name='F42'));
F4.append(DiGraph({0:[2],3:[0,1,2]}, name='F43'));
F4.append(DiGraph({0:[2,3],1:[3],2:[0,3]}, name='F44'));
F4.append(DiGraph({0:[2],1:[2],3:[0,1,2]}, name='F45'));
F4.append(DiGraph({0:[2],2:[0],3:[0,1,2]}, name='F46'));
F4.append(DiGraph({0:[1,2,3],1:[2,3],2:[3]}, name='F47'));
F4.append(DiGraph({0:[1,2,3],1:[0,2,3],2:[3]}, name='F48'));
F4.append(DiGraph({0:[1,2,3],1:[0,2,3],2:[3],3:[2]}, name='F49'));
F4.append(DiGraph({0:[1],1:[0],2:[0,1,3],3:[0,1]}, name='F410'));

def Gamma(L,n):
	for i in range(n+1):
		I = R.ideal(L.minors(i))
		if( I.is_one() == True ):
			next
		else:
			return i-1
	return n

def isForb(L,n):
	for i in range(n):
		S = range(n)
		S.remove(i)
		I = R.ideal(L[S,S].minors(2))
		if( I.is_one() == True ):
			return "not forbidden"
		else:
			next
	return "forbidden"

for F in F3:
	n = len(F)
	R = macaulay2.ring("ZZ","[x0,x1,x2]").to_sage();
	R.inject_variables();
	Laplacian = diagonal_matrix(list(R.gens())) - F.adjacency_matrix()
	F.show()
	print('Forbidden graph ' + F.name() + ' has Gamma = ' + str(Gamma(Laplacian,n)) + ', and is ' + isForb(Laplacian,n) + '\n')

for F in F4:
	n = len(F)
	R = macaulay2.ring("ZZ","[x0,x1,x2,x3]").to_sage();
	R.inject_variables();
	Laplacian = diagonal_matrix(list(R.gens())) - F.adjacency_matrix()
	F.show()
	print('Forbidden graph ' + F.name() + ' has Gamma = ' + str(Gamma(Laplacian,n)) + ', and is ' + isForb(Laplacian,n) + '\n')
\end{lstlisting}


\vspace{.5cm}

{\parindent=0pt
\obeylines
Banco de M\'exico, 
Mexico City, Mexico
corresponding author: carlos.alfaro@banxico.org.mx, alfaromontufar@gmail.com
\par}
\vspace{0.5cm}
{\parindent=0pt
\obeylines
Departamento de Matem\'aticas, 
Centro de Investigaci\'on y de Estudios Avanzados del IPN, 
Apartado Postal 14-740, 07000 Ciudad de M\'exico, Mexico
cvalencia@math.cinvestav.edu.mx
\par}
\vspace{0.5cm}
{\parindent=0pt
\obeylines
Subdirecci\'on de Ingenier\'ia y Posgrado
Universidad Aeron\'autica en Quer\'etaro
adrian.vazquez@unaq.edu.mx
\par}

\end{document}